\newtheorem{thm}{Theorem}[section]
\newtheorem{crit}[thm]{Criterion}
\newtheorem{lem}[thm]{Lemma}
\newtheorem{prop}[thm]{Proposition}
\newtheorem{rem}[thm]{Remark}
\newtheorem{defin}[thm]{Definition}
\newcommand{\R}{{\mathbb{R}}}
\newcommand{\g}{{\gamma}}
\renewcommand{\k}{{\kappa}}
\newcommand{\de}{{\partial}}
\renewcommand{\a}{{\alpha}}
\newcommand{\Om}{\Omega}
\newcommand{\om}{\omega}
\DeclareMathOperator{\RR}{reach}
\DeclareMathOperator{\interior}{int}
\DeclareMathOperator{\dist}{dist}
\DeclareMathOperator{\DIVV}{div}
\begin{document}

\definecolor{ffffff}{rgb}{1.,1.,1.}
\definecolor{cqcqcq}{rgb}{0.75,0.75,0.75}
\definecolor{uuuuuu}{rgb}{0.25,0.25,0.25}

\title[A sufficient criterion to determine planar self-Cheeger sets]{A sufficient criterion to determine\\ planar self-Cheeger sets}
\author{Giorgio Saracco}
\address{Scuola Internazionale Superiore di Studi Avanzati (SISSA), via Bonomea 265, IT--34136 Trieste}
\email{gsaracco@sissa.it}

\thanks{G.~S.~is a member of INdAM and has been partially supported by the INdAM--GNAMPA Project 2019 ``Problemi isoperimetrici in spazi Euclidei e non'' (n.~prot.~U-UFMBAZ-2019-000473 11-03-2019) and the INdAM--GNAMPA Project 2020 ``Problemi isoperimetrici con anisotropie'' (n.~prot.~U-UFMBAZ-2020-000798 15-04-2020).}

\subjclass[2010]{Primary: 49Q10. Secondary: 35J93, 49Q20}

\keywords{Cheeger constant, inner Cheeger formula, self-Cheeger sets, perimeter minimizer, prescribed mean curvature}

\begin{abstract}
We prove a sufficient criterion to determine if a planar set $\Om$ minimizes the prescribed curvature functional $\mathcal{F}_\kappa[E]:=P(E)-\kappa|E|$ amongst $E\subset \Om$. As a special case, we derive a sufficient criterion to determine if $\Om$ is a self-Cheeger set, i.e.~if it minimizes the ratio $P(E)/|E|$ among all of its subsets. As a side effect we provide a way to build self-Cheeger sets.
\end{abstract}
 \hspace{-2cm}
 {
 \begin{minipage}[t]{0.6\linewidth}
 \begin{scriptsize}
 \vspace{-3cm}
 This is a pre-print of an article published in \emph{J. Convex Anal.}. The final authenticated version is available online at: \href{https://www.heldermann.de/JCA/JCA28/JCA283/jca28055.htm}{https://www.heldermann.de/JCA/JCA28/JCA283/jca28055.htm}
 \end{scriptsize}
\end{minipage} 
}

\maketitle

\section{Introduction}

Given an open, bounded set $\Om\subset \R^2$ its \emph{Cheeger constant}, firstly introduced in~\cite{Che70} in a Riemannian setting, is defined as
\begin{equation}\label{eq:Cheeger_const}
h_\Om:=\inf \left\{\, \frac{P(E)}{|E|}: E\subset \Om, |E|>0\,\right\}\,.
\end{equation}
Usually one refers to the task of computing $h_\Om$ and/or of finding sets $E$ attaining the above infimum as to the \emph{Cheeger problem}. Any set $E$ attaining the infimum in~\eqref{eq:Cheeger_const} is called \emph{Cheeger set} of $\Om$; if $\Om$ itself is a minimizer, it is said to be \emph{self-Cheeger}; if $\Om$ is the unique minimizer, it is said to be a \emph{minimal Cheeger}. Notice that any Cheeger set is a nontrivial minimizer of the prescribed curvature functional
\begin{equation}\label{eq:pmc}
\mathcal{F}_\kappa [E] = P(E) -\kappa|E|\,,
\end{equation}
amongst $E\subset \Om$, when the constant $\kappa>0$ is chosen as $\kappa=h_\Om$. The Cheeger problem has drawn a lot of attention because it is intimately tied to many other problems scattered in different fields of mathematics; for instance it is well known that the functional~\eqref{eq:pmc}  admits nontrivial minimizers if and only if $\kappa\ge h_\Om$. The interested reader is referred to~\cite{Leo15, Par11} which are introductory surveys containing basic results and links to other problems, and to~\cite{BP18, Car17, CL19, PS17, Sar18} for further generalizations.\par
In this short note we provide a sufficient criterion to determine if a set $\Om$ is self-Cheeger. This follows from a more general criterion to determine if a set $\Om$ is a minimizer itself of~\eqref{eq:pmc}. In order to state our main result we introduce the following definition of \emph{(strict) interior rolling disk property of radius $R$} for a set $\Om$.

\begin{defin}\label{defin:ird}
We say that a Jordan domain $\Om$ satisfies the \emph{interior rolling disk property of radius $R$} if $\RR(\mathbb{R}^2\setminus \Om)\ge R$. We say that it has the \emph{strict} interior rolling disk property if additionally for all $z\in \de((\mathbb{R}^2\setminus \Om) \oplus B_R)$ no antipodal points of $\de B_R(z)$ lie both on $\de \Om$.
\end{defin}
 
We recall that a Jordan domain is the bounded, open set enclosed by an injective, continuous map $\Phi\colon \mathbb{S}^1 \to \mathbb{R}^2$, which is well defined thanks to the Jordan--Schoenflies Theorem. Furthermore, we recall that a set $A$ has reach $R$ if for all $r<R$ the points in $A\oplus B_r$ have unique projection on $A$, and we refer the reader to the seminal work~\cite{Fed59} and the recent book~\cite{RZ19book}. Roughly speaking, a set $A$ has reach $R$ if it is possible to roll on the exterior of its boundary a ball of radius $R$.
 

\begin{thm}\label{thm:criterion_pmc}
Let $\Om$ be a Jordan domain. Assume that $\Om$ satisfies the interior rolling disk property of radius $R$ with $R\le |\Om|/P(\Om)$. Then, $\Om$ is a minimizer of $\mathcal{F}_\kappa$ for the choice $\kappa = R^{-1}$. Moreover, if it satisfies the strict property it is the unique minimizer. 
\end{thm}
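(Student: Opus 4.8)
The plan is to exhibit a \emph{calibration} for $\Om$: a Lipschitz vector field $X\colon\overline\Om\to\R^2$ with
\[
|X|\le 1\ \text{in }\Om,\qquad \DIVV X\le R^{-1}\ \text{a.e. in }\Om,\qquad X\cdot\nu_\Om = 1\ \ \mathcal H^1\text{-a.e. on }\de\Om ,
\]
where $\nu_\Om$ is the outer normal. Granted such an $X$, minimality is immediate from the Gauss--Green formula: for every $E\subset\Om$ of finite perimeter, $\int_E\DIVV X=\int_{\de^*E}X\cdot\nu_E\le P(E)$ since $|X|\le1$, while testing on $\Om$ gives $\int_\Om\DIVV X=\int_{\de\Om}X\cdot\nu_\Om=P(\Om)$. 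Subtracting the two identities,
\[
P(\Om)-P(E)\le\int_{\Om\setminus E}\DIVV X\le R^{-1}|\Om\setminus E| = R^{-1}\bigl(|\Om|-|E|\bigr),
\]
which rearranges exactly to $\mathcal F_{R^{-1}}[\Om]\le\mathcal F_{R^{-1}}[E]$. I emphasize that testing $\DIVV X\le R^{-1}$ on $\Om$ itself forces $P(\Om)\le R^{-1}|\Om|$, i.e. $R\le|\Om|/P(\Om)$: the standing hypothesis is precisely the consistency condition making such a calibration possible. Note also that it is the one-sided bound $\DIVV X\le R^{-1}$ (not equality) that allows $\mathcal F_{R^{-1}}[\Om]$ to be strictly negative, so no equality constraint on $\de\Om$ is imposed.

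The construction of $X$ is where the rolling disk property enters. The reach bound $\RR(\R^2\setminus\Om)\ge R$ is equivalent to the structural identity $\Om = K\oplus B_R$, with core $K:=\{x\in\Om:\dist(x,\de\Om)\ge R\}$, and it forces the (a.e.\ defined) curvature of $\de\Om$ to be $\le R^{-1}$. I would therefore seek $X$ of the form $X(x)=R^{-1}\bigl(x-\xi(x)\bigr)$, where $\xi(x)\in K$ selects, in a Lipschitz fashion, the centre of an inscribed disk $B_R(\xi(x))\ni x$. Then $|x-\xi(x)|\le R$ yields $|X|\le1$, with equality on $\de\Om$, where $\xi$ is the foot of the inner normal so that $X=\nu_\Om$; moreover $\DIVV X = R^{-1}(2-\DIVV\xi)$, so the divergence bound $\DIVV X\le R^{-1}$ is equivalent to $\DIVV\xi\ge1$. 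On the model disk $B_\r$ (with $R=\r/2$) this is realised by $\xi(x)=x/2$, giving the familiar calibration $X(x)=x/\r$; in general the task is to choose the centre map so that it ``spreads'' with divergence at least $1$, which should be achievable precisely because the boundary curvature never exceeds $R^{-1}$.

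I expect the main obstacle to be the regularity of $\xi$ across the cut locus (medial axis) of $\de\Om$. The naive choice $X=-\nabla\dist(\cdot,\de\Om)$ fails on two counts: its divergence $k/(1-tk)$ (with $k$ the boundary curvature and $t$ the distance) blows up as one approaches a focal point of a boundary arc with $k=R^{-1}$, and it carries a spurious \emph{positive} singular divergence along the medial axis, both incompatible with $\DIVV X\le R^{-1}$. Hence the field must be genuinely sub-unit in the interior and continuous across the cut locus, so that $\DIVV X$ has no singular part; establishing $\DIVV\xi\ge1$ up to the cut locus, using only the curvature bound supplied by the reach hypothesis, is the crux of the argument. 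Finally, for uniqueness under the strict property, I would argue that equality $\mathcal F_{R^{-1}}[E]=\mathcal F_{R^{-1}}[\Om]$ forces $X\cdot\nu_E=|X|=1$ $\mathcal H^1$-a.e.\ on the free boundary $\de^*E\cap\Om$ together with $\DIVV X=R^{-1}$ on $\Om\setminus E$; but $\{|X|=1\}\cap\Om$ consists of points lying on inscribed disks pinned by antipodal contact with $\de\Om$, which the strict rolling disk condition rules out. Thus the free boundary must be empty and $E=\Om$.
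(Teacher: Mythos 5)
Your calibration framework is formally sound as far as it goes: if a Lipschitz field $X$ on $\overline\Om$ with $|X|\le 1$, $\DIVV X\le R^{-1}$ and normal trace $1$ on $\de\Om$ exists, then the Gauss--Green computation you give does yield $\mathcal F_{R^{-1}}[\Om]\le\mathcal F_{R^{-1}}[E]$ for every $E\subset\Om$ of finite perimeter, and your observation that testing on $\Om$ forces $R\le|\Om|/P(\Om)$ correctly identifies the role of that hypothesis. But the proposal stops exactly where the theorem begins: you never construct $X$. The ansatz $X(x)=R^{-1}(x-\xi(x))$ requires a Lipschitz selection $\xi(x)\in\Om_R\cap\overline{B_R(x)}$ with $\DIVV\xi\ge 1$ a.e., and no argument is offered that such a selection exists; you yourself flag the behaviour across the medial axis as ``the crux''. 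This is not a routine verification: producing such a field from the rolling disk hypothesis is essentially equivalent to solving the capillarity problem \eqref{eq:cap} with $\a=0$, which is the content of Chen's theorem \cite[Theorem~4.1]{Che80} (combined with \cite{Giu78, LS18a} to translate existence of the field back into minimality), and even that route only covers piecewise smooth $\Om$ with the \emph{strict} rolling disk property. The non-strict case and your uniqueness discussion (where one must rule out $|X|=1$ on the candidate free boundary) are likewise left at the level of plausibility.

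The paper takes an entirely different, set-theoretic route: Lemma~\ref{lem:equi} shows that the interior rolling disk property of radius $R$ is equivalent to $\Om$ having no necks of radius $R$ together with the identity $\Om=\Om_R\oplus B_R$ (the strict property additionally giving $\Om_R=\overline{\interior(\Om_R)}$), and then the structure theorem for maximal minimizers, Theorem~\ref{thm:LNS}, immediately identifies $\Om_R\oplus B_R=\Om$ as the maximal (and, in the strict case, unique) minimizer of $\mathcal F_{R^{-1}}$. All the analytic difficulty your calibration would have to overcome is absorbed into that structure theorem. As written, your proposal is a programme rather than a proof; to complete it you would need either to carry out Chen's construction (or a genuine medial-axis analysis establishing $\DIVV\xi\ge1$ up to the cut locus), or to fall back on the structural results the paper invokes.
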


\begin{rem}\label{rem:nec}
The theorem provides a sufficient but not necessary condition:~consider the bow-tie depicted in Figure~\ref{fig:bowtie}. For suitable choices of $\kappa$ and of the width of the neck, the bow-tie minimizes $\mathcal{F}_\kappa$ in itself but it does not satisfy the criterion, as one can see in~\cite[Example~4.2]{LP16}. If one additionally requires the convexity of $\Om$ the condition essentially becomes necessary. If $\de \Om$ is of class $\mathrm{C}^2$ this is trivial: given $x\in \de \Om$ s.t.~the classic curvature of $\de \Om$ at $x$ is given by $\bar \k$, classic characterization of convexity implies there exists a ball $B$ of radius $\bar \k^{-1}$ entirely contained in $\Om$ such that $x\in \de B$. If $\de \Om$ is less regular, one can use the one-to-one correspondence between convex sets and Radon measures, satisfying a particular distributional inequality, to give a generalized notion of curvature, and the same characterization holds; for a brief discussion of this fact, see for instance~\cite[Section~3]{KL06} and compare with~\cite[Theorem~2]{KL06}.
\end{rem}

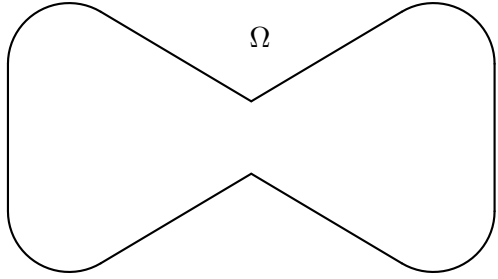
\begin{figure}[t]
\centering
\begin{tikzpicture}[line cap=round,line join=round,>=triangle 45,x=.8cm,y=.8cm]
\clip(-4.2,-2.5) rectangle (4.2,2.5);
\draw [shift={(-2.9932567029285186,1.221899674620759)},line width=.8pt]  plot[domain=1.030376826524313:3.141592653589793,variable=\t]({1.*1.0067432970714816*cos(\t r)+0.*1.0067432970714816*sin(\t r)},{0.*1.0067432970714816*cos(\t r)+1.*1.0067432970714816*sin(\t r)});
\draw [shift={(-2.9932567029285186,-1.221899674620759)},line width=.8pt]  plot[domain=1.030376826524313:3.141592653589793,variable=\t]({1.*1.0067432970714816*cos(\t r)+0.*1.0067432970714816*sin(\t r)},{0.*1.0067432970714816*cos(\t r)+-1.*1.0067432970714816*sin(\t r)});
\draw [shift={(2.9932567029285186,-1.2218996746207595)},line width=.8pt]  plot[domain=1.030376826524313:3.141592653589793,variable=\t]({-1.*1.0067432970714816*cos(\t r)+0.*1.0067432970714816*sin(\t r)},{0.*1.0067432970714816*cos(\t r)+-1.*1.0067432970714816*sin(\t r)});
\draw [shift={(2.9932567029285186,1.2218996746207595)},line width=.8pt]  plot[domain=1.030376826524313:3.141592653589793,variable=\t]({-1.*1.0067432970714816*cos(\t r)+0.*1.0067432970714816*sin(\t r)},{0.*1.0067432970714816*cos(\t r)+1.*1.0067432970714816*sin(\t r)});
\draw [line width=.8pt] (-2.4752915497801284,2.085174929868077)-- (0.,0.6);
\draw [line width=.8pt] (-4.,1.221899674620759)-- (-4.,-1.221899674620759);
\draw [line width=.8pt] (4.,1.2218996746207589)-- (4.,-1.2218996746207595);
\draw [line width=.8pt] (-2.4752915497801284,-2.085174929868077)-- (0.,-0.6);
\draw [line width=.8pt] (2.4752915497801284,2.0851749298680766)-- (0.,0.6);
\draw [line width=.8pt] (2.4752915497801284,-2.0851749298680766)-- (0.,-0.6);
\draw (-0.2,2.0) node[anchor=north west] {$\Omega$};
\end{tikzpicture}
\caption{A bow-tie domain. There is a range of widths of the bow-tie's neck such that the set is a minimizer of the prescribed curvature functional $\mathcal{F}_\kappa$ while it does not satisfy the hypotheses of Theorem~\ref{thm:criterion_pmc}.}\label{fig:bowtie}
\end{figure}

\begin{rem}
It is noteworthy to remark that if a Jordan domain has the interior rolling disk property of radius $R$, then it has the strict interior rolling disk property of radius $r$ for all $r<R$. Thus, Theorem~\ref{thm:criterion_pmc} implies that $\Om$ is the unique minimizer of $\mathcal{F}_\kappa$ with $\k=r^{-1}$ for all choices of $r<R$.
\end{rem}


Notice that in the statement of Theorem~\ref{thm:criterion_pmc} one could replace the inequality $R\le |\Om|/P(\Om)$ with the, a-priori, larger one $R\le h_\Om^{-1}$ which is equivalent to the existence of nontrivial minimizers of $\mathcal{F}_{R^{-1}}$. Nevertheless, if $R= |\Om|/P(\Om)$ one ends up proving that $R$ is actually the inverse of the Cheeger constant and that $\Om$ is a self-Cheeger set, thus producing the following criterion.

\begin{crit}\label{crit:main}
Let $\Om$ be a Jordan domain. Assume that it satisfies the interior rolling disk property of radius $R=|\Om|/P(\Om)$. Then, $h_\Om=R^{-1}$ and $\Om$ is self-Cheeger. Moreover, if it satisfies the strict property it is a minimal Cheeger.
\end{crit}

A few remarks are in order. The above criterion is sufficient but not necessary. An example of minimal Cheeger set which does not possess the interior rolling disk property is given by a bow-tie with suitably small neck, depicted in Figure~\ref{fig:bowtie}. Computations for a nonsmoothed out bow-tie are available in~\cite[Example~4.2]{LP16}.\par

As noticed in Remark~\ref{rem:nec}, if the set $\Om$ is convex the criterion is not just sufficient but as well necessary. This is essentially proven in~\cite[Theorem 2]{KL06}, see also~\cite{BCN02, FK02, SZ97}. Yet, there are nonconvex sets which are covered by Criterion~\ref{crit:main}. Aside from the bow-tie with sufficiently large neck previously discussed, other examples are provided by suitable strips that were shown to be self-Cheeger in~\cite{KP11, LP16}, under some technical assumption on their length, which can now be dropped in view of Criterion~\ref{crit:main} or the results in~\cite{LNS17}. More in general, taken any convex self-Cheeger set $\Om$ one can add a ``tendril'' of suitable width producing a set which is covered by the criterion. This is exemplified by the Pinocchio example shown in Figure~\ref{fig:pinocchio}. One can in principle add as many tendrils as s/he wishes; for instance taking two directly opposed to each other rather than Pinocchio's nose one produces a ``cloud'' or a ``Dumbo'' set which is still covered by the criterion. Computations for these sets are available in~\cite[Example~4.6 and~4.7]{LP16}.

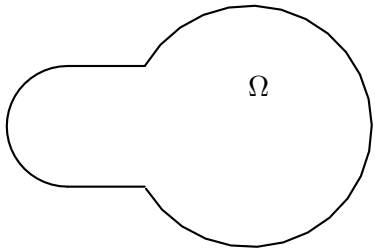
\begin{figure}[t]
\centering
\begin{tikzpicture}[line cap=round,line join=round,>=triangle 45,x=.8cm,y=.8cm]
\clip(-4.2,-2.2) rectangle (2.2,2.2);
\draw [shift={(-3.,0.)},line width=.8pt]  plot[domain=1.5707963267948966:4.71238898038469,variable=\t]({1.*1.*cos(\t r)+0.*1.*sin(\t r)},{0.*1.*cos(\t r)+1.*1.*sin(\t r)});
\draw [shift={(0.,0.)},line width=.8pt]  plot[domain=-2.5977601972876707:2.6179938779914944,variable=\t]({1.*2.*cos(\t r)+0.*2.*sin(\t r)},{0.*2.*cos(\t r)+1.*2.*sin(\t r)});
\draw [line width=.8pt] (-3.,1.)-- (-1.7320508075688772,1.);
\draw [line width=.8pt] (-3.,-1.)-- (-1.7320508075688772,-1.);
\draw (-0.2,1.0) node[anchor=north west] {$\Omega$};
\end{tikzpicture}
\caption{The ``Pinocchio'' set: a nonconvex self-Cheeger set that is covered by the criterion. The ``face'' is a ball of radius $1$, while the ``nose'' is a ``tendril'' of suitable width which can be made as long as one wishes.}\label{fig:pinocchio}
\end{figure}

A weaker version of the ``strict'' part of Criterion~\ref{crit:main} is already present in the literature by combining two theorems centered around the capillarity problem; yet up to our knowledge it has never been presented in the terminology of Cheeger sets and such an elegant criterion is missing in the two widespread surveys~\cite{Leo15, Par11} on Cheeger sets. Hence, it is of interest making it readily available to the ``Cheeger community''. More precisely, in~\cite[Theorem~4.1]{Che80} Chen proves that a \emph{piecewise smooth} set $\Om$ possessing the strict interior rolling disk property\footnote{We warn the interested reader that in~\cite{Che80} Chen defines as interior rolling disk property what we here call \emph{strict} interior rolling disk property.} of radius $|\Om|/P(\Om)$ is such that the nonlinear PDE 
\begin{equation}\label{eq:cap}
\begin{cases}
\DIVV\left( \frac{\nabla u}{\sqrt{1+|\nabla u|^2}}\right) = \frac{P(\Om)}{|\Om|}\cos \a\,, \qquad &\text{in $\Om$},\\
\frac{\nabla u \cdot \nu_\Om}{\sqrt{1+|\nabla u|^2}}  = \cos \a\,, &\text{on $\de \Om$},
\end{cases}
\end{equation}
has solutions for all choices of angles $\a$ in the range $[0, \pi/2]$. In~\cite[Theorem~4.7 and~5.1]{LS18a} (see also the seminal paper~\cite{Giu78}) it is proved that existence of solutions of~\eqref{eq:cap} for the choice $\a=0$ is equivalent to say that (a piecewise Lipschitz) $\Om$ is a minimal Cheeger set. Hence, the ``strict'' part of Criterion~\ref{crit:main}  for piecewise smooth sets follows by combining these two results. As a consequence of Criterion~\ref{crit:main} one also gets existence of solutions of~\eqref{eq:cap} in the nonstrict case for all $\a \in (0, \pi/2]$ (we refer the interested reader to the comprehensive treatise~\cite{Fin86book}).\par

Thanks to very recent results which we recollect in Section~\ref{sec:prel} we are able to give a very short proof of Theorem~\ref{thm:criterion_pmc} and of Criterion~\ref{crit:main}, which are contained in Section~\ref{sec:proofs}. As a side result of the main theorem and of Steiner's formulae, we are able to provide a way to construct Cheeger sets as stated in the following proposition, whose proof is as well contained in Section~\ref{sec:proofs}.

\begin{prop}\label{prop:main}
Let $\om \subset \R^2$ be a closed, simply connected set such that $|\om|=\pi R^2$ and $\RR(\om) > R$. Then, the Minkowski sum $\Om = \om\oplus B_R$ is self-Cheeger. Moreover, if $\om = \overline{\interior(\om)}$, then $\Om$ is a minimal Cheeger.
\end{prop}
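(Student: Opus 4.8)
The strategy is to verify that $\Omega=\omega\oplus B_R$ meets every hypothesis of Criterion~\ref{crit:main} and then to invoke it directly. I first record the regularity of $\Omega$. Since $R<\RR(\omega)$, the outer parallel set $\omega\oplus B_R$ has a $\mathrm{C}^{1,1}$ boundary and inherits simple connectedness from $\omega$, so $\Omega$ is a Jordan domain with $\de\Omega=\{x:\dist(x,\omega)=R\}$; moreover every $x\in\de\Omega$ has a unique foot point $p(x)\in\omega$ because $\dist(x,\omega)=R<\RR(\omega)$, and $x=p(x)+R\,\nu(x)$ with $\nu(x)$ the outward unit normal.

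Next I would compute the two geometric quantities through the normal flow $t\mapsto\omega\oplus B_t$, which is legitimate for $t\in[0,R]$ as $R<\RR(\omega)$. Since $\omega$ is simply connected its boundary has total curvature $2\pi$, so the first variation of length gives $P(\omega\oplus B_t)=P(\omega)+2\pi t$, and integrating the area rate $\frac{d}{dt}|\omega\oplus B_t|=P(\omega\oplus B_t)$ yields
\[
|\Omega|=|\omega|+R\,P(\omega)+\pi R^2,\qquad P(\Omega)=P(\omega)+2\pi R .
\]
Inserting the normalisation $|\omega|=\pi R^2$ then produces
\[
\frac{|\Omega|}{P(\Omega)}=\frac{2\pi R^2+R\,P(\omega)}{P(\omega)+2\pi R}=R,
\]
which is exactly the radius demanded by the Criterion. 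The only subtle point here is the validity of these Steiner-type identities in the nonconvex setting, handled above by the constancy of the total curvature along the flow.

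It remains to establish the interior rolling disk property of radius $R$, namely $\RR(\mathbb{R}^2\setminus\Omega)\ge R$. For every $p\in\omega$ one has $B_R(p)\subset\omega\oplus B_R=\Omega$, and for $p=p(x)$ this disk meets $\de\Omega$ exactly at $x$; thus a disk of radius $R$ can be rolled inside $\Omega$ so as to touch the boundary at each of its points. Quantitatively, any $y\in\Omega$ with $s:=\dist(y,\omega)\in(0,R)$ has a unique foot point $p\in\omega$, and its nearest point on $\mathbb{R}^2\setminus\Omega$ is the unique point $p+R\,(y-p)/s$; uniqueness of the foot point forces uniqueness of this projection, giving $\RR(\mathbb{R}^2\setminus\Omega)\ge R$. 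Criterion~\ref{crit:main} now applies and yields $h_\Omega=R^{-1}$ together with the self-Cheeger property.

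For the final assertion I would verify the strict property under the additional hypothesis $\omega=\overline{\interior(\omega)}$, and I expect this to be the main obstacle. A failure of strictness would provide a center $z$ and a direction $u$ with both antipodal points $z\pm Ru\in\de\Omega$ sharing $z$ as their unique foot point in $\omega$; equivalently, both $u$ and $-u$ are outward normals of $\omega$ at $z$, so the two open disks $B_R(z\pm Ru)$ are disjoint from $\omega$ and mutually tangent at $z$. Consequently the tangent cone of $\omega$ at $z$ is contained in $\{w:w\cdot u\le 0\}\cap\{w:w\cdot u\ge 0\}=\{w:w\cdot u=0\}$, a single line; for a set of positive reach this confines $\omega$ to a one-dimensional set near $z$, contradicting $z\in\omega=\overline{\interior(\omega)}$. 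Hence no antipodal touching occurs, the strict interior rolling disk property holds, and the strict part of Criterion~\ref{crit:main} gives that $\Omega$ is a minimal Cheeger set. This tangent-cone step is precisely where the solidity assumption $\omega=\overline{\interior(\omega)}$ is genuinely used, the earlier parts requiring only $\RR(\omega)>R$ and $|\omega|=\pi R^2$.
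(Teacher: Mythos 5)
Your proof of the first assertion is essentially the paper's: you verify that $\Om=\om\oplus B_R$ is a Jordan domain, use Steiner-type formulae to get $|\Om|/P(\Om)=R$, check the interior rolling disk property, and invoke Criterion~\ref{crit:main}. One small imprecision: the coefficient in the Steiner formulae is the outer Minkowski content $\mathcal{M}_o(\om)$, not $P(\om)$ (these differ if $\om$ has lower-dimensional parts, which the hypotheses do not exclude); since the quantity cancels in the ratio, your conclusion $|\Om|/P(\Om)=R$ survives, but the two displayed identities are not literally correct as stated. The rolling-disk verification is fine and is actually more detailed than the paper, which simply observes that the property is built into the construction.

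The ``moreover'' part, however, contains a genuine gap: the implication you try to prove --- that $\om=\overline{\interior(\om)}$ forces the \emph{strict} interior rolling disk property --- is false. The failure is exactly at the step ``the tangent cone of $\om$ at $z$ is contained in a line \ldots\ this confines $\om$ to a one-dimensional set near $z$.'' A set of positive reach can be pinched at a point $z$ where the tangent cone is a full line and both $\pm u$ are proximal normals, while $z$ is still a limit of interior points: take $\om$ to be two solid lobes whose complement near $z$ consists of two open disks of radius $\rho>R$ tangent at $z$ (locally $\{(x,y):|y|\le \rho-\sqrt{\rho^2-x^2}\}$). This set is closed, simply connected, equals the closure of its interior, has reach $\rho>R$ near $z$, and yet $z\pm Ru$ are antipodal points of $\de B_R(z)$ both lying on $\de\Om$. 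This is precisely the phenomenon of Figure~\ref{fig:double_bubble} and of the remark following Lemma~\ref{lem:equi}: $\Om_R=\overline{\interior(\Om_R)}$ together with $\Om=\Om_R\oplus B_R$ does \emph{not} imply the strict property. The correct route to minimality bypasses the strict rolling disk property altogether: since $\RR(\om)>R$ gives $\Om_R=\om$, and the (non-strict) rolling disk property gives, via Lemma~\ref{lem:equi}, that $\Om$ has no necks of radius $R$ with $\Om=\Om_R\oplus B_R$, the hypothesis $\om=\overline{\interior(\om)}$ reads $\Om_R=\overline{\interior(\Om_R)}$, and uniqueness of the minimizer then follows directly from the last clause of Theorem~\ref{thm:LNS}. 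You should replace your tangent-cone argument with this appeal to the structure theorem.
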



\section{Preliminaries}\label{sec:prel}

We recall the definition of \emph{no necks of radius $R$} for a planar domain $\Om$.

\begin{defin}[Definition~1.2 of~\cite{LNS17}]\label{def:nonecks}
A Jordan domain $\Om$ has \emph{no necks of radius $R$} if taken any two balls $B_R^0$ and $B_R^1$ of radius $R$ entirely contained in $\Om$ there exists a continuous curve $\g\colon [0,1]\to \Om$ s.t.~$B_R(\g(0))=B^0_R$, $B_R(\g(1))=B^1_R$ and $B_R(\g(t)) \subset \Om$ for all times $t\in[0,1]$.
\end{defin}

\begin{rem}\label{rem:reg}
One can suppose the curve $\g$ to be of class $\mathrm{C}^{1,1}$ with curvature bounded by $1/R$ thanks to~\cite[Theorem 1.8]{LNS17}, see also~\cite[Lemma~5.1]{LNS17} combined  with~\cite[Theorem~1.2,~1.3]{Lyt05} or~\cite[Remark~6.7]{RZ17}. Moreover, asking a set $\Om$ to have no necks of radius $R$ is equivalent to ask that the inner parallel set $\Om_R := \{\,x\in \Om: \dist(x;\de\Om) \ge R\,\}$ is path-connected.
\end{rem}

Whenever a Jordan domain $\Om$ has no necks of radius $\k^{-1}\le h_\Om^{-1}$, Leonardi and myself proved a structure theorem for minimizers of the prescribed curvature functional $\mathcal{F}_\k [E] := P(E)-\k|E|$ amongst $E\subset \Om$ in~\cite{LS19}. Such a theorem extends~\cite[Theorem~1.4]{LNS17} obtained jointly with Neumayer valid in the limit case $\k=h_\Om$. Since the class of minimizers is closed under countable unions, one can define a (unique) \emph{maximal} minimizer, of which a full geometric characterization is available thanks to the abovementioned results. In particular, by defining the interior parallel set of $\Om$ at distance $r$ as
\begin{equation}\label{eq:inner_parallel}
\Om_r :=  \{\,x\in \Om: \dist(x;\de\Om) \ge r\,\}
\end{equation}
one has that the maximal minimizer of $\mathcal{F}_\kappa$ is given by the Minkowski sum $\Om_R \oplus B_R$, where $R=\k^{-1}$. This result is essentially sharp as can be seen by the examples contained in~\cite{LNS17, LS18b}. For the sake of completeness we recall below the full statement.

\begin{thm}[Theorem~2.3 of~\cite{LS19}]\label{thm:LNS}
Let $\Om \subset \R^2$ be a Jordan domain with $|\de \Om|=0$, and let $\kappa\ge h_\Om$ be fixed. If $\Om$ has no necks of radius $R=\kappa^{-1}$, then the maximal minimizer of the prescribed curvature functional $\mathcal{F}_\k$ is given by $\Om_R \oplus B_R$. Moreover, if $\Om_R = \overline{\interior(\Om_R)}$, it is the unique minimizer.
\end{thm}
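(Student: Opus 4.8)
The plan is to construct the maximal minimizer abstractly, show it is assembled out of disks of radius $R$, and then pin it down by comparing its energy with that of the explicit candidate $\Om_R\oplus B_R$. Since $\k\ge h_\Om$, nontrivial minimizers of $\mathcal F_\k$ among subsets of $\Om$ exist by the direct method (compactness in $BV$, lower semicontinuity of $P$, continuity of the volume term). As $\mathcal F_\k$ is submodular, $\mathcal F_\k[E\cup F]+\mathcal F_\k[E\cap F]\le\mathcal F_\k[E]+\mathcal F_\k[F]$ (perimeter is submodular and Lebesgue measure is additive), the minimizers form a lattice; the union of a sequence of minimizers whose measures approach the supremum, which is bounded by $|\Om|$, then yields a unique maximal minimizer $M$ of positive measure.

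The technical heart is to show that $M$ is exactly the fattening of its own inner parallel set. Here I would invoke the regularity theory for $(\Lambda,r_0)$-minimizers: the free boundary $\de M\cap\Om$ is $\mathrm C^{1,1}$ and consists of circular arcs of curvature $R^{-1}$ bending towards $M$ and meeting $\de\Om$ tangentially. The osculating disk of radius $R$ at each free-boundary point then lies inside $M$, which yields the inner-ball property $M=M_R\oplus B_R$, where $M_R:=\{x:B_R(x)\subseteq M\}$. Since $M\subseteq\Om$ we trivially have $M_R\subseteq\Om_R$, and therefore $M\subseteq\Om_R\oplus B_R=:C$.

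To reverse this inclusion I would show that $C$ is itself a minimizer and then invoke maximality. The no-necks hypothesis makes $\Om_R$ and $M_R$ connected by Remark~\ref{rem:reg}, and the Jordan-domain assumption fixes their Euler characteristic, so Steiner's formula for these positive-reach parallel sets gives the clean identity $\mathcal F_\k[K\oplus B_R]=\pi R-|K|R^{-1}$ for $K\in\{M_R,\Om_R\}$. Because $|M_R|\le|\Om_R|$ and the right-hand side is strictly decreasing in $|K|$, we obtain $\mathcal F_\k[C]\le\mathcal F_\k[M]=\min\mathcal F_\k$; the reverse inequality holds since $C$ is an admissible competitor, so $C$ is a minimizer and, by maximality of $M$, $C\subseteq M$, whence $M=\Om_R\oplus B_R$. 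As a consistency check, at $\k=h_\Om$ the minimum value is $0$, and this identity reproduces the inner Cheeger formula $|\Om_R|=\pi R^2$.

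For uniqueness, suppose $\Om_R=\overline{\interior(\Om_R)}$ and let $E$ be any minimizer. The inner-ball property gives $E=E_R\oplus B_R\subseteq C=M$, while $\mathcal F_\k[E]=\mathcal F_\k[M]$ forces $|E_R|=|\Om_R|$ through the strict monotonicity above; since $E_R$ is closed, $E_R\subseteq\Om_R$, and $\Om_R\setminus E_R$ is Lebesgue-null, every interior point of $\Om_R$ must lie in $E_R$, and taking closures with $\Om_R=\overline{\interior(\Om_R)}$ gives $E_R=\Om_R$, i.e. $E=M$. I expect the inner-ball identity $M=M_R\oplus B_R$ to be the main obstacle, as it is precisely what forbids thin or lower-dimensional appendages and rests on the full free-boundary regularity; a secondary delicate point is justifying Steiner's formula with the correct Euler characteristic for the possibly non-convex sets $\Om_R$ and $M_R$, for which the no-necks connectivity and the planar Gauss--Bonnet theorem are the right tools.
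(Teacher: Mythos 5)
A point of order first: this paper does not prove Theorem~\ref{thm:LNS} at all --- it is imported verbatim as Theorem~2.3 of \cite{LS19} and used as a black box --- so there is no internal proof to compare yours against; I can only measure your sketch against the argument actually carried out in \cite{LS19,LNS17}. Your outline does track that argument in broad strokes: maximal minimizer via submodularity, free-boundary regularity, the identity $\mathcal F_\k[K\oplus B_R]=\pi R-R^{-1}|K|$ coming from Steiner's formulae for a connected, simply connected $K$ of positive reach, and the measure-theoretic uniqueness step using $\Om_R=\overline{\interior(\Om_R)}$. The consistency check recovering the inner Cheeger formula $|\Om_R|=\pi R^2$ at $\k=h_\Om$ is also correct.

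As a proof, however, it has genuine gaps, and you have located the main one but underestimated it. The identity $M=M_R\oplus B_R$ does \emph{not} follow from the local statement that the free boundary consists of arcs of curvature $R^{-1}$, concave towards $M$ and meeting $\de\Om$ tangentially: the containment of the entire osculating disk of radius $R$ in $M$ is a global assertion, and it is precisely here that planarity, the Jordan-domain topology and (for the reverse inclusion) the no-neck hypothesis are consumed in \cite{LNS17,LS19} through a nontrivial cut-and-paste argument; ``I would invoke the regularity theory'' delivers the arcs, not the inner-ball identity, so the load-bearing step of the theorem is asserted rather than proved. Two further points need repair. First, you apply Steiner's formulae to $M_R$ and $E_R$ at the offset exactly $R$, while these sets are only known to have reach $\ge R$; as this very paper warns in its final remark, Steiner's formulae for a set of reach $\rho$ hold for offsets strictly less than $\rho$ but not at $\rho$ itself, so a limiting argument (or the explicit $\mathrm C^{1,1}$ parametrization of $\de(K\oplus B_r)$ for $r<R$) is required to legitimize the clean identity. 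Second, the no-neck hypothesis makes $\Om_R$ path-connected (Remark~\ref{rem:reg}) but says nothing about $M_R$ or $E_R$, which are merely subsets of $\Om_R$; if $M_R$ has $n$ components the identity degrades to $n\pi R-R^{-1}|M_R|$, and only when the fattened components have disjoint contributions to the perimeter, so connectedness (or a component-by-component energy comparison, noting that each indecomposable component of a minimizer is itself a minimizer) must be argued separately. With these three items supplied, your scheme does yield the theorem, but as written it is an outline of the known proof rather than a proof.
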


\section{Proofs of the results}\label{sec:proofs}

In view of the structure theorem of minimizers of $\mathcal{F}_\kappa$ provided by Theorem~\ref{thm:LNS}, the proofs of Theorem~\ref{thm:criterion_pmc} and of Criterion~\ref{crit:main} boil down to show that $\Om$ satisfying the interior rolling disk property of radius $R$ is equivalent to say that $\Om$ has no necks of radius $R$ and it agrees with the Minkowski sum $\Om_R\oplus B_R$. This is exactly what we show in the next lemma.

\begin{lem}\label{lem:equi}
Let $\Om$ be a Jordan domain. Then, it satisfies the interior rolling disk property of radius $R$ if and only if it has no necks of radius $R$ and the set equality $\Om = \Om_R\oplus B_R$ holds. Moreover, if it satisfies the strict property of radius $R$ one has the set equality $\Om_R = \overline{\interior(\Om_R)}$.
\end{lem}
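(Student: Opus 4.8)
The plan is to phrase everything through the closed complement $A:=\R^2\setminus\Om$ and two elementary identities: for $x\in\Om$ one has $\dist(x;A)=\dist(x;\de\Om)$, so that $\Om_R=\{x\in\Om:\dist(x;\de\Om)\ge R\}$, while $\de(A\oplus B_R)=\{x\in\Om:\dist(x;\de\Om)=R\}$ is exactly the set of centres of maximal inscribed disks touching $\de\Om$. The points $z$ in the strict property are precisely these centres, and $\de B_R(z)\cap\de\Om$ is the (closed) set of points where the rolling disk touches $\de\Om$. With this dictionary the statement becomes a comparison of the rolling disk with the geometry of the level sets of $\dist(\cdot;\de\Om)$.

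First I would prove that $\RR(A)\ge R$ forces both remaining conditions. For the Minkowski equality the inclusion $\Om_R\oplus B_R\subseteq\overline\Om$ is automatic; for the reverse one I use that positivity of the reach gives, for each $x\in\Om$ with $0<\dist(x;\de\Om)=:d<R$, a \emph{unique} foot $p=\pi_A(x)\in\de\Om$, so that with $c:=p+R\,(x-p)/|x-p|$ the disk $B_R(c)$ is contained in $\Om$, touches $\de\Om$ only at $p$, whence $c\in\Om_R$ and $x\in\overline{B_R(c)}\subseteq\Om_R\oplus B_R$; points with $d\ge R$ already lie in $\Om_R$. Thus $\Om=\Om_R\oplus B_R$ up to the null set $\de\Om$, which is harmless since $|\de\Om|=0$. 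For the \emph{no necks} property I would show, as in Remark~\ref{rem:reg}, that $\Om_R$ is path-connected: rolling the disk of radius $R$ continuously along the connected curve $\de\Om$ makes its centre trace a connected subset of the level set $\{\dist(\cdot;\de\Om)=R\}\subseteq\Om_R$ (the trace stays continuous even across concave corners of $\Om$, where the disk merely pivots about the corresponding salient point of $A$). It then suffices to connect an arbitrary $x\in\Om_R$ to this level set: moving $x$ along the segment towards a foot point, the continuous function $\dist(\cdot;\de\Om)$ decreases from its value at $x$ down to $0$, hence attains the value $R$, and truncating the segment at the first such instant yields a path inside $\Om_R$ joining $x$ to $\{\dist(\cdot;\de\Om)=R\}$.

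For the converse I would assume the Minkowski equality and the no necks property and establish $\RR(A)\ge R$ by excluding non-uniqueness of the projection. Suppose some $y$ with $\dist(y;A)<R$ has two distinct feet $p_1\ne p_2\in\de\Om$; then $y$ is a medial-axis point of $\Om$ within distance $R$ of $\de\Om$. The geometric claim is that this is incompatible with the hypotheses: such a point either forces $\Om_R$ to disconnect (as for two overlapping disks of radius $R$, where $\Om=\Om_R\oplus B_R$ holds but $\Om_R$ collapses to the two centres), contradicting the no necks property, or else it leaves a point of $\Om$ uncovered by inscribed disks of radius $R$, contradicting $\Om=\Om_R\oplus B_R$. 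Turning this picture into a clean argument — quantifying how two feet $p_1,p_2$ at distance $d<R$ obstruct the passage of radius-$R$ disks and thereby separate $\Om_R$, which I would do by analysing $\dist(\cdot;\de\Om)$ along the bisector of $p_1,p_2$ — is the step I expect to be the main obstacle.

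Finally, for the strict part I would assume $\RR(A)\ge R$ together with the absence of antipodal touchings and prove $\Om_R=\overline{\interior(\Om_R)}$. As $\Om_R$ is closed and $\{\dist(\cdot;\de\Om)>R\}\subseteq\interior(\Om_R)$, it suffices to approximate every $z$ with $\dist(z;\de\Om)=R$ by points of strictly larger distance. Writing the touching set as $\de B_R(z)\cap\de\Om=\{z+Ru_i\}$, a displacement $z+tv$ increases the distance to every touching point once $\langle v,u_i\rangle<0$ for all $i$, and such $v$ exists precisely when the $u_i$ do not positively span $\R^2$. The strict hypothesis removes antipodal pairs; the remaining obstruction (a positively spanning touching set without antipodal pair, such as a triple tangency) I would rule out using the no necks property already established, for a positively spanning touching set makes $z$ a strict local maximum of $\dist(\cdot;\de\Om)$, hence an isolated point of the connected set $\Om_R$, forcing $\Om_R=\{z\}$ and $\Om=\overline{B_R(z)}$, a disk — which has antipodal touchings and contradicts the strict property. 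Therefore the displacement direction exists, $z\in\overline{\{\dist(\cdot;\de\Om)>R\}}\subseteq\overline{\interior(\Om_R)}$, and the claimed equality follows.
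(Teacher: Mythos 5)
The forward implication and the strict part of your argument are essentially sound: the Minkowski equality via Federer's tangent-ball property of sets of positive reach is correct, and your convex-hull analysis of the touching directions $u_i$ (no antipodal pair and $0\in\mathrm{conv}\{u_i\}$ forces $0\in\interior\mathrm{conv}\{u_i\}$, hence an isolated point of the connected set $\Om_R$) is a legitimate, self-contained alternative to the paper's appeal to the regularity of $\Om_R\setminus\overline{\interior(\Om_R)}$ and the antipodal-projection property of its points. The decisive problem is the converse implication, that no necks of radius $R$ together with $\Om=\Om_R\oplus B_R$ forces $\RR(\R^2\setminus\Om)\ge R$: you state a dichotomy (``either $\Om_R$ disconnects or a point of $\Om$ is left uncovered'') and then explicitly defer the argument that a point $y$ with two feet at depth $d<R$ realises one of the two horns. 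This is a genuine gap, not a routine verification. Covering $y$ by a single inscribed disk $B_R(c)\subseteq\Om$ with $c\in\Om_R$ is perfectly compatible, locally, with $y$ having two distinct feet $p_1\neq p_2$: the constraints $|c-p_i|\ge R$, $|y-p_i|=d$, $|y-c|<R$ only yield $\langle c-y,\,p_i-y\rangle< d^2/2$, which is no contradiction, so the covering hypothesis alone does not exclude multiple projections and it is unclear how your bisector analysis would close the case in which $\Om_R$ remains connected. The paper's route avoids the medial axis entirely: it uses that the inner parallel set always satisfies $\RR(\Om_R)\ge R$, converts the hypothesis $\Om=\Om_R\oplus B_R$ by morphological duality into $\R^2\setminus\Om_R=(\R^2\setminus\Om)\oplus B_R$, and then for $x\in\Om\setminus\Om_R$ exploits the unique projection $z_x$ onto $\de\Om_R$ to trap $B_{\dist(x;\de\Om)}(x)$ internally tangent inside $B_R(z_x)\subseteq\Om$, which forces the foot of $x$ on $\de\Om$ to be the single tangency point. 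Some global input of this kind is needed; your local picture does not supply it.

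A secondary, fixable issue: in the forward direction you obtain connectivity of $\Om_R$ by ``rolling the disk along $\de\Om$'' at level exactly $R$, where the trace of the centre can degenerate (a stadium of width $2R$, say) and the continuity of the motion is precisely what is in question. The paper instead works at a level $r<R$, where $(\R^2\setminus\Om)\oplus B_r$ has $\mathrm{C}^{1,1}$ boundary homotopic, via the deformation retract given by the projection, to the single Jordan curve $\de\Om$, and then pushes that boundary curve inward by $R-r$ along its normal; some such regularisation is needed to make your rolling argument rigorous. Likewise, in the strict part the separating direction $v$ must be shown to increase the distance to all of $\de\Om$, not merely to the touching set, which requires a small compactness argument on $\de\Om\cap \overline{B_{R+\delta}(z)}$; this is routine but should be said.
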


\begin{proof}
By~\cite[Lemma 4.8]{RZ19book} 
if
$\RR(\mathbb{R}^2\setminus \Om)\ge R$, then $\mathbb{R}^2\setminus \Om$ is (morphologically) closed w.r.t.~$B_R$, i.e.~$\mathbb{R}^2\setminus \Om=((\mathbb{R}^2\setminus \Om)\oplus B_R)\ominus B_R$. Moreover, a set $A$ is (morphologically) closed w.r.t.~$B_R$ if and only if its complement set is (morphologically) open, i.e.~$\Om=(\Om\ominus B_R)\oplus B_R=\Om_R \oplus B_R$. We are left with showing that $\Om_R$ is path-connected. This is a consequence of $\Om$ being a Jordan domain. As $\mathbb{R}^2\setminus \Om$ has reach $R$, for any $0<r<R$ its $r$-offset $(\mathbb{R}^2\setminus \Om) \oplus B_r$ has $\mathrm{C}^{1,1}$ boundary~\cite[Corollary~4.22]{RZ19book}. Moreover, the projection
\[
\Pi_{\de \Om}:\big( (\mathbb{R}^2\setminus \Om) \oplus B_r \big) \setminus (\mathbb{R}^2\setminus \Om) \to \de \Om
\]
is a deformation retract, thus $\de \Om$ and the $\mathrm{C}^{1,1}$ boundary of $(\mathbb{R}^2\setminus \Om) \oplus B_r$ are homotopic, see~\cite[Lemma~4.52]{RZ19book}. Hence, this $\mathrm{C}^{1,1}$ boundary has only one connected component and it is the image of a Jordan curve $\gamma_r$. Taken now any two points $x_0, x_1 \in \Om_R$, we let $w_0, w_1$ be any of their projections on $\de \Om$. Denoted by $\overline{x_iw_i}$ the segment with endpoints $x_i, w_i$, we set $z_i$ to be the point on $\de \Om_R \cap \overline{w_ix_i}$ and $z^r_i$ be the point on $\text{Im}(\gamma_r) \cap \overline{w_ix_i}$. It is now easy to provide a continuous curve, possibly not simple, from $z_0$ to $z_1$ in $\de \Om_R$. Restricting $\gamma_{r}$ to $[\gamma_{r}^{-1}(z^r_0),\gamma_{r}^{-1}(z^r_1)]$, and reparametrizing it, this is given by
\[
\tilde \gamma(t) : = \gamma_{r}(t) +(R-r) \frac{\dot \gamma_r^\perp(t)}{|\dot\gamma_r(t)|}\,,
\]
up to changing orientation. A concatenation of the segments $\overline{x_iz_i}$ and of $\tilde \gamma$ gives the desired curve.
\par
We now show the opposite direction. By~\cite[Lemma~5.1]{LNS17}, $\RR(\Om_R)\ge R$, thus by~\cite[Lemma~4.8]{RZ19book} $\Om_R$ is (morphologically) closed w.r.t.~$B_R$, and thus its complement set is (morphologically) open, i.e.
\begin{equation}\label{eq:set_eq_R}
\mathbb{R}^2\setminus \Om_R = \Big((\mathbb{R}^2\setminus\Omega_R)\ominus B_R\Big)\oplus B_R\,.
\end{equation}
As $A^c \ominus B_R = (A\oplus B_R)^c$, from~\eqref{eq:set_eq_R} it follows that
\begin{align*}
\mathbb{R}^2\setminus \Om_R&=\Big((\mathbb{R}^2\setminus\Omega_R)\ominus B_R\Big)\oplus B_R = (\Omega_R \oplus B_R)^c \oplus B_R \\
&= \Omega^c \oplus B_R = (\mathbb{R}^2 \setminus \Omega)\oplus B_R\,.
\end{align*}
Therefore, we have
\begin{equation}\label{eq:set_eq_R_2}
\Big((\mathbb{R}^2 \setminus \Omega)\oplus B_R\Big) \setminus (\mathbb{R}^2 \setminus \Omega) = \Big(\mathbb{R}^2\setminus \Om_R\Big)  \setminus \Big((\mathbb{R}^2 \setminus \Omega)\Big) = \Omega \setminus \Om_R\,.
\end{equation}
As $\RR(\Om_R)\ge R$, all $x\in \Om \setminus \Om_R$ have unique projection $z_x$ on $\de \Om_R$. By~\eqref{eq:set_eq_R_2}, our claim consists in showing that any of these $x$ has as well unique projection on $\de \Om$. This is straightforward as $|x-z_x|>0$, $B_R(z_x) \supset B_{\dist(x; \de \Om)}(x)$ and thus $\de B_R(z) \cap \de B_{\dist(x; \de \Om)}(x)$ contains at most one point. Hence, it contains \emph{exactly} one.
\par
We are left to show the last part of the claim;~suppose that $\Om$ has the strict property and let by contradiction $\Om_R \setminus \overline{\interior(\Om_R)}\neq \emptyset$. By~\cite[Proposition~2.1]{LS19}, this set difference consists of $\mathrm{C}^{1,1}$ curves. By~\cite[Remark~4.1]{LS19} any point $x$ on these curves has two antipodal projections on $\de \Om$. This yields a contradiction with assuming the strict property to hold.
\end{proof}

\begin{rem}
Notice that the second part of Lemma~\ref{lem:equi} is not an ``if and only if''. There exist sets $\Om$ with no necks of radius $R$ such that their inner parallel set $\Om_R$ agrees with the closure of its interior, $\Om$ equals the Minkowski sum $\Om_R\oplus B_R$ but for which the strict interior rolling disk property of radius $R$ does not hold. An example is depicted in Figure~\ref{fig:double_bubble}. Notice that the ball with antipodal points on $\de \Om$ is centered on a point that, if removed, would disconnect $\Om_R$.
\end{rem}

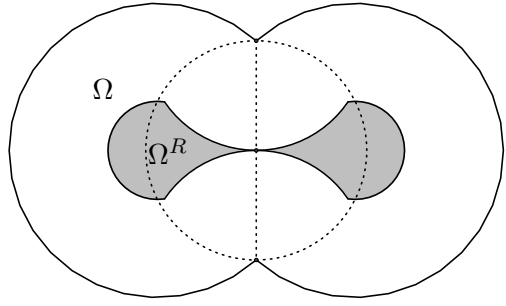
\begin{figure}[t]
\centering
\begin{tikzpicture}[line cap=round,line join=round,>=triangle 45,x=1.0cm,y=1.0cm, scale=.65]
\clip(-5.2,-3.3) rectangle (5.2,3.3);
\fill[line width=0.pt,color=cqcqcq,fill=cqcqcq,fill opacity=1.0] (-1.6015221281203638,0.6755786718200547) -- (-1.6029105656623637,-0.677004887535833) -- (1.606667296651344,-0.6808766073234171) -- (1.5785820070773684,0.6523764523183266) -- cycle;

\draw [line width=0.6pt,color=cqcqcq,fill=cqcqcq,fill opacity=1.0] (-2.,0.) circle (1.cm);
\draw [line width=0.6pt,color=cqcqcq,fill=cqcqcq,fill opacity=1.0] (2.,0.) circle (1.cm);

\draw [line width=0.6pt,color=ffffff,fill=ffffff,fill opacity=1.0] (0.,-2.23606797749979) circle (2.23606797749979cm);
\draw [line width=0.6pt,color=ffffff,fill=ffffff,fill opacity=1.0] (0.,2.23606797749979) circle (2.23606797749979cm);

\draw [shift={(-2.,0.)},line width=0.6pt]  plot[domain=0.8410686705679302:5.442116636611656,variable=\t]({1.*3.*cos(\t r)+0.*3.*sin(\t r)},{0.*3.*cos(\t r)+1.*3.*sin(\t r)});
\draw [shift={(2.,0.)},line width=0.6pt]  plot[domain=0.8410686705679302:5.442116636611656,variable=\t]({-1.*3.*cos(\t r)+0.*3.*sin(\t r)},{0.*3.*cos(\t r)+1.*3.*sin(\t r)});
\draw [shift={(-2.,0.)},line width=0.6pt]  plot[domain=1.426737205580549:4.856448101599037,variable=\t]({1.*0.9999811971695507*cos(\t r)+0.*0.9999811971695507*sin(\t r)},{0.*0.9999811971695507*cos(\t r)+1.*0.9999811971695507*sin(\t r)});
\draw [shift={(2.,0.)},line width=0.6pt]  plot[domain=1.426737205580549:4.856448101599037,variable=\t]({-1.*0.9999811971695507*cos(\t r)+0.*0.9999811971695507*sin(\t r)},{0.*0.9999811971695507*cos(\t r)+1.*0.9999811971695507*sin(\t r)});
\draw [shift={(0.,-2.23606797749979)},line width=0.6pt]  plot[domain=0.5912836787256022:2.550308974864191,variable=\t]({1.*2.23606797749979*cos(\t r)+0.*2.23606797749979*sin(\t r)},{0.*2.23606797749979*cos(\t r)+1.*2.23606797749979*sin(\t r)});
\draw [shift={(0.,2.23606797749979)},line width=0.6pt]  plot[domain=0.5912836787256022:2.550308974864191,variable=\t]({1.*2.23606797749979*cos(\t r)+0.*2.23606797749979*sin(\t r)},{0.*2.23606797749979*cos(\t r)+-1.*2.23606797749979*sin(\t r)});
\draw (-3.521404093983697,1.664831777956845) node[anchor=north west] {$\Omega$};
\draw (-2.4,0.45) node[anchor=north west] {$\Omega^R$};
\draw [line width=0.6pt,dotted] (0.,0.) circle (2.23606797749979cm);
\draw [line width=0.6pt,dotted] (0.,2.23606797749979)-- (0.,-2.23606797749979);
\begin{scriptsize}
\draw [fill=uuuuuu] (0.,0.) circle (1.0pt);
\draw [fill=uuuuuu] (0.,2.23606797749979) circle (1.0pt);
\draw [fill=uuuuuu] (0.,-2.23606797749979) circle (1.0pt);
\end{scriptsize}
\end{tikzpicture}
\caption{A set $\Om$ with no necks of radius $R$ such that ${\Om=\Om_R \oplus B_R}$, $\Om_R = \overline{\interior{(\Om_R)}}$ but for which the strict interior rolling disk property of radius $R$ does not hold.}\label{fig:double_bubble}
\end{figure}

\begin{proof}[Proof of Theorem~\ref{thm:criterion_pmc} and of Criterion~\ref{crit:main}]
First, notice that the $2$ dimensional Lebesgue measure of $\de \Om$ has zero measure, i.e.~$|\de \Om|=0$, since one has $\RR(\mathbb{R}^2\setminus \Om)>0$, see~\cite[Theorem 6.1~(v),~(vii) and Theorem~6.2~(ii)]{DZ98}. Second, notice that as one can use $\Om$ as competitor for estimating the Cheeger constant, one has the upper bound $h_\Om\le R^{-1}$. In virtue of Lemma~\ref{lem:equi} and of Theorem~\ref{thm:LNS} we immediately find that $\Om$ is a minimizer of
\[
\mathcal{F}_\k [E] = P(E)-\kappa|E|,
\]
for $\kappa = R^{-1}$. If the strict property given in Definition~\ref{defin:ird} holds, i.e.~none of the interior rolling disks is such that two antipodal points of the boundary lie on $\de \Om$, uniqueness follows as well from Lemma~\ref{lem:equi}.\par
Suppose now that $R=|\Om|/P(\Om)$ and argue by contradiction letting $R^{-1}>h_\Om$. Notice that the minimum of $\mathcal{F}_{R^{-1}}$ is strictly negative. Indeed, taken a Cheeger set $E$ of $\Om$, which are well known to exist, it is immediate to check that
\[
P(\Om)-R^{-1}|\Om| = \min \mathcal{F}_{R^{-1}}  \le P(E)-R^{-1} |E| < P(E)-h_\Om|E| = 0.
\]
Then,
\begin{align*}
R < \frac{|\Om|}{P(\Om)}.
\end{align*}
This immediately produces a contradiction, which concludes the proof.
\end{proof}

\begin{proof}[Proof of Proposition~\ref{prop:main}]
Consider the set $\Om = \om \oplus B_R$. Since $\om$ has reach greater than $R$, one has that $\Om_R = \om$. Thanks to the regularizing effect of the Minkowski sum of a set with reach strictly greater than $R$ with $B_R$, see~\cite[Corollary~4.22]{RZ19book}, we have that $\Omega$ is $\mathrm{C}^{1,1}$. Moreover, through the projection $\Pi_{\de \omega} : \Omega\setminus \omega \to \omega$, one can define a deformation retract between $\Omega$ and $\omega$, and thus they are homotopic, see~\cite[Lemma~4.52]{RZ19book}. Hence,  $\Omega$ is connected and simply connected, thus a Jordan domain.
\par
In virtue of the validity of Steiner's formulae (see~\cite[Section~2.3]{LNS17} or the more general~\cite[Section~4.5]{RZ19book}), one has
\begin{align*}
|\Om| = |\om| + R\mathcal{M}_o(\om) + \pi R^2, && P(\Om) = \mathcal{M}_o(\om) + 2\pi R,
\end{align*}
where $\mathcal{M}_o$ denotes the outer Minkowski content. On the one hand by hypothesis the equality $|\om|=\pi R^2$ holds, thus the ratio $|\Om|/P(\Om)$ equals $R$. On the other hand, the construction itself implies that $\Om$ possesses the interior rolling disk property of radius $R$. By Criterion~\ref{crit:main}, the claim immediately follows.
\end{proof}

\begin{rem}
Note that we need the strict inequality $\RR(\omega)>R$ in Proposition~\ref{prop:main}. Indeed, a key tool in the proof is Steiner's formulae which, for sets of reach $\rho$, are valid up to $\rho$ but not $\rho$ itself.
\end{rem}

\bibliographystyle{plainurl}

\bibliography{self_Cheeger_criterion}

\end{document}